\author{{\bf Maya Mohsin Ahmed }}
\title{{\LARGE {\bf The intricate labyrinth of Collatz Sequences }}}
\date{}
\newtheorem{thm}{Theorem}[section]
\newtheorem{prop}{Proposition}[section]
\newtheorem{exm}{Example}[section]
\newtheorem{corollary}{Corollary}[section]
\begin{document}     
\maketitle           

\begin{abstract}

 In a  previous article, we reduced the  unsolved problem of
 the  convergence  of Collatz  sequences,  to convergence  of
 Collatz sequences  of odd numbers, that are  divisible by 3.
 In this article, we further  reduce this set to odd numbers
 that  are congruent  to $21$  mod $24$.  We also  show that
 either the  Collatz sequence  of a given  odd number  or an
 equivalent Collatz sequence reverses  to a multiple of $3$.
 Moreover, we  construct a network composed  of Collatz sequences
 of all odd numbers.

\end{abstract}

\section{Introduction.}

Given a positive integer $A$, construct the sequence $c_i$ as follows:
\[  
\begin{array}{llll}
c_i & = & A & \mbox{if $i=0$;} \\ & = & 3c_{i-1}+1 & \mbox{if
  $c_{i-1}$ is odd;} \\ & = & c_{i-1}/2 & \mbox{if $c_{i-1}$ is even.}
\end{array} 
\]

The sequence $c_i$ is called a {\em Collatz sequence} with {\em
  starting number} $A$. The {\em Collatz conjecture} says that this
sequence will eventually reach the number 1, regardless of which
positive integer is chosen initially. The sequence gets in to an
infinite cycle of 4, 2, 1 after reaching 1.

\begin{exm} \label{Collatzeg} {\em 
The Collatz sequence of $27$ is:
\[
\begin{array}{l}
27, 82, 41, 124, 62, 31, 94, 47, 142, 71, 214, 107, 322, 161, 484,
242, 121, 364, 182, 91, 274, 137, \\ 412, 206, 103, 310, 155, 466,
233, 700, 350, 175, 526, 263, 790, 395, 1186, 593, 1780, 890, 445,
\\ 1336, 668, 334, 167, 502, 251, 754, 377, 1132, 566, 283, 850, 425,
1276, 638, 319, 958, 479, 1438, \\ 719, 2158, 1079, 3238, 1619, 4858,
2429, 7288, 3644, 1822, 911, 2734, 1367, 4102, 2051, 6154, \\ 3077, 9232,
4616, 2308, 1154, 577, 1732, 866, 433, 1300, 650, 325, 976, 488, 244,
122, 61, 184, 92, \\ 46, 23, 70, 35, 106, 53, 160, 80, 40, 20, 10, 5, 16,
8, 4, 2, 1,4,2,1,4,2,1, \dots
\end{array}
\]
} \end{exm}

For the  rest of this  article, we will ignore  the infinite
cycle  of $4,2,1$,  and  say that  a  Collatz sequence  {\em
  converges  to  $1$},  if   it  reaches  $1$.  The  Collatz
conjecture is  an unsolved conjecture  about the convergence
of a  sequence named  after Lothar Collatz.  A comprehensive
study of the Collatz conjecture can be found in \cite{lag1},
\cite{lag2}, and \cite{wir}.

 In this article, like in \cite{ahmed}, we focus on the subsequence of
 odd numbers of a Collatz sequence. This is because every even number
 in a Collatz sequence has to reach an odd number after a finite
 number of steps.  Observe that the Collatz conjecture implies that
 the subsequence of odd numbers of a Collatz sequence converges to
 $1$.
\begin{exm} \label{subseqodd27eg} {\em The subsequence of odd numbers of the Collatz sequence of $27$ in Example \ref{Collatzeg} is:
\[
\begin{array}{l}
27, 41, 31, 47, 71, 107, 161, 121, 91, 137, 103, 155, 233, 175, 263,
395, 593, 445, 167, 251, 377, 283, \\ 425, 319, 479, 719, 1079, 1619,
2429, 911, 1367, 2051, 3077, 577, 433, 325, 61, 23, 35, 53, 5, 1, 1,
\dots
\end{array}
\]
} \end{exm}

 If two Collatz sequences become equal after a finite number
 of terms, we  say they are {\em merging  sequences}. We say
 two Collatz  sequences are  {\em equivalent} if  the second
 odd number occurring in the sequences are same.

Let $A$  be an odd number. If $A >1$, write $A=2^kN+1$ such
that  $N$  is an  odd  number and  $k  \geq  1$. In  Section
\ref{2^ksection}, we prove if $k = 2$ then Collatz sequences
of  $A$ and  $N$ are  equivalent; if  $k >  2$  then Collatz
sequences of $A$ and  $2A+1$ are equivalent; and finally, if
$k=1$, then  the Collatz sequence of $A$  either merges with
the Collatz sequence of $N$ or $2A+1$.

 Let $A$ be a  positive integer. In \cite{ahmed}, we defined
 a sequence in the reverse direction defined as follows. The
 {\em reverse Collatz sequence} $r_i$ of $A$ is given by
\[
r_i =  \left \{ \begin{array}{llllllll}  A & \mbox{ if  $i =
    0$; } \\ \\

 \frac{r_{i-1}-1}{3} & \mbox{ if $r_{i-1} \equiv 1$ mod $3$ and $r_{i-1}$ is
  even}; \\ \\ 2r_{i-1} &  \mbox{ if $r_{i-1} \not \equiv 1$ mod $3$ and
    $r_{i-1}$ is even;} \\ \\ 2r_{i-1} &  \mbox{ if $r_{i-1}$ is odd}.
\end{array}
\right .
\]

 We say  that a reverse Collatz sequence  {\em converges} if
 the subsequence of odd numbers of the sequence converges to
 a multiple of  $3$. Let $A$ be an odd  number. We showed in
 \cite{ahmed} (also see  Section \ref{2^ksection}), that the
 Collatz  sequences of  $A$  and $4A+1$  are equivalent.  In
 \cite{ahmed},  we  conjectured  that  the  reverse  Collatz
 sequence of every number converges. Though, we cannot prove
 that conjecture yet, in Section \ref{reverse}, we show that
 the  reverse  Collatz  sequence  of either  $A$  or  $4A+1$
 converges.  We  also discuss  how  the  convergence of  the
 reverse Collatz  sequences of  $A$, $2A+1$, and  $4A+1$ are
 related in Section \ref{reverse}.

Given an  odd number $A$,  in Section \ref{reduce21section},
we show that the Collatz sequence of $A$ is equivalent to an
odd  number of  the form  $24t+21$, where  $t \geq  0$. This
result  reduces  the  unsolved  problem  of  convergence  of
Collatz  sequences  to  the  set  of odd  numbers  that  are
divisible by $3$ and are congruent to $21$ modulo $24$.

Finally,  in Section  \ref{networksection},  we construct  a
network  of  diagonal   arrays  which  contain  the  Collatz
sequences of all odd numbers.
\section{The coupling of the Collatz sequences of  $A$ and $2A+1$.} \label{2^ksection}

In  this section,  we  discuss the  merging  of the  Collatz
sequence of an odd number  $N$ with the Collatz sequences of
$2N+1$ and $4N+3$.

If  $A = 4^d  \times P  + 4^{d-1}+4^{d-2}  + \cdots+4  + 1$,
where $P$ is an integer and $d \geq 1$, then we say that $A$
is a {\em jump from $P$ of height $d$}. If $P \not \equiv 1$
mod 4, then $d$ is the {\em maximum height} of $A$.

\begin{exm} {\em  $53 = 4 \times
    13 +1  = 4^2 \times  3 +4  + 1$ is  a jump from  $13$ of
    height $1$ and a jump from $3$ of height $2$. }
\end{exm}

\begin{prop} \label{tailsthm} 
 Let $A$ be an odd  number and let $a_i$ denote the sequence
 of odd numbers  in the Collatz sequence of  $A$ with $a_0 =
 A$.
\begin{enumerate} 

\item \label{jumppart} Consider the sequence $c_i = 4c_{i-1}+1$ with $c_0=A$.
  Then for  any $i$, the  Collatz sequence of $A$  and $c_i$
  are equivalent.

\item \label{2kpart}  For some $r \geq 0$, $a_r \equiv 1$ mod $4$, and
\[ a_i = \frac{3a_{i-1}+1}{2}, i=1, \dots, r, \mbox{ and } a_{r+1}  = \frac{3a_r+1}{2^k}, \mbox{ for some } k \geq 2. \] 

\item \label{divby2^3part} If $A \equiv 1$ mod $4$, and if $A > 1$, then $A = 2^k
  \times N +1 $ for some  odd number $N$ and $k \geq 2$. Let
  $n_i$ denote  the odd numbers  in the Collatz  sequence of
  $N$ with $n_0 = N$.

\begin{enumerate}

\item If $k=2$, then $a_1 =  n_1$. Moreover, if $A = 4^d P +
  4^{d-1} + \cdots + 4^2 +  4 + 1$, such that $P \not \equiv
  1$ mod $4$, then
\[
a_1 = \frac{3A + 1}{4^d \times 2^i}  = \frac{3P + 1}{2^i},
\] where $i=1$ if $P$  is odd, and  
$i=0$ otherwise.

\item \label{3multpart} If $k  > 2$, write $k=2j+r$, $r=0$  or $r=1$. If $r=0$
  then $a_i  = 2^{k-2i}  \times 3^iN +  1$, for  $i=1, \dots,
  j-1$,  and the  Collatz sequence  of $A$  merges  with the
  Collatz  sequence of  $3^{j-1}N$.  On the  other hand,  if
  $r=1$, then  $a_i = 2^{k-2i}  \times 3^iN + 1$,  for $i=1,
  \dots, j$.

\end{enumerate}

\end{enumerate}
\end{prop}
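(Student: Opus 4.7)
The plan is to verify each part by direct computation with the Collatz rule, tracking factors of $2$ carefully.

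For part \ref{jumppart}, I would first prove the case $i = 1$: computing $3(4A+1)+1 = 4(3A+1)$ shows that after the odd step and two halvings, the Collatz sequence of $4A+1$ reaches $3A+1$, which is the second term of the Collatz sequence of $A$. The two sequences then agree, so the first odd number appearing after $4A+1$ equals $a_1$, giving equivalence. Induction on $i$ extends this to every $c_i$.

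For part \ref{2kpart}, I would argue that the number of trailing $1$'s in the binary expansion of $a_i$ decreases by exactly one at each step while $a_i \equiv 3 \pmod 4$: if $a_i = 2^{\ell} m - 1$ with $m$ odd and $\ell \geq 2$, then $3a_i + 1 = 2(3\cdot 2^{\ell-1} m - 1)$, giving $a_{i+1} = 3\cdot 2^{\ell-1} m - 1$ with exactly $\ell - 1$ trailing $1$'s. This process reaches an $a_r \equiv 1\pmod 4$ after $r = \ell - 1$ steps, and then $a_r \equiv 1 \pmod 4$ forces $3a_r + 1 \equiv 0 \pmod 4$, so the next division strips at least $k = 2$ factors of $2$.

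Part \ref{divby2^3part} splits into two cases. When $k = 2$ we have $A = 4N + 1$, so part \ref{jumppart} applied to $N$ gives $a_1 = n_1$. The iterated identity $A = 4^d P + 4^{d-1} + \cdots + 1$ corresponds to applying $x \mapsto 4x+1$ a total of $d$ times starting from $P$, so part \ref{jumppart} applied $d$ times yields $a_1 = p_1$; the closed form follows from the geometric identity $3A + 1 = 4^d(3P + 1)$, with an extra halving precisely when $P$ is odd. When $k > 2$ I would prove $a_i = 2^{k-2i}\cdot 3^i N + 1$ by induction on $i$. The inductive step computes $3a_i + 1 = 4(2^{k-2i-2}\cdot 3^{i+1} N + 1)$; the right factor is odd iff $k - 2(i+1) \geq 1$, giving validity up to $i = j-1$ when $r = 0$ and up to $i = j$ when $r = 1$. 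In the $r = 0$ case, $a_{j-1} = 4\cdot 3^{j-1} N + 1$ and $3a_{j-1} + 1 = 4(3^j N + 1)$; writing $3^j N + 1 = 2^s m$ with $m$ odd identifies $a_j = m$, which is also the first odd number after $3^{j-1} N$ in the Collatz sequence of $3^{j-1} N$, establishing the merging.

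The main obstacle is the parity bookkeeping in part \ref{divby2^3part}: distinguishing the cut-off $k - 2i = 0$ (case $r = 0$) from $k - 2i = 1$ (case $r = 1$), and identifying the merging point at $a_j$ where the clean formula breaks down. Everything else reduces to direct expansion of $3a_i + 1$ and recognition of powers of $4$.
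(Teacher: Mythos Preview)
Your proposal is correct and follows essentially the same route as the paper: direct computation of $3(4A+1)+1=4(3A+1)$ for part~\ref{jumppart}, the identity $3A+1=4^d(3P+1)$ plus parity of $P$ for part~3(a), and the recursive formula $a_i=2^{k-2i}\cdot 3^iN+1$ leading to $a_{j-1}=4\cdot 3^{j-1}N+1$ for part~3(b). The only real difference is that the paper outsources parts~\ref{jumppart} and~\ref{2kpart} to results in \cite{ahmed}, whereas you supply self-contained arguments; your trailing-ones-in-binary argument for part~\ref{2kpart} is a clean way to make that step explicit.
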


\begin{proof} \hfill
\begin{enumerate}

\item This result is  proved as Corollary 2 in  \cite{ahmed}.

\item We get this  result by  applying  Theorem 2 in \cite{ahmed}.

\item  \begin{enumerate}

\item When  $k =  2$, $A =  4N+1$, therefore $a_1=  n_1$, by
  part  (\ref{jumppart}). Since $A$  is a  jump from  $P$ of
  height $d$,  Lemma 2 in  \cite{ahmed} implies $3A+1  = 4^d
  (3P+1)$. Consequently, $a_1 =  (3A + 1)/(4^d \times 2^i) =
  (3P + 1)/2^i$. Since $P \not  \equiv 1$ mod $4$, if $P$ is
  odd,  $P \equiv  3 \mbox{  mod }  4$, which  implies $3P+1
  \equiv 2$ mod $4$. Consequently, $i=1$. On the other hand,
  if $P$ is even, $3P+1$ is odd, hence $i=0$.

\item When  $k>2$ \[\frac{3A+1}{4}  = \frac{3\times 2^k  N +
  4}{4} = 3 \times 2^{k-2}N+1\] is an odd number. Hence $a_1
  = 2^{k-2} \times 3 \times  N + 1$. We repeat this argument
  to get  that if $r=0$ then  $a_i = 2^{k-2i}  \times 3^iN +
  1$,  for  $i=1, \dots,  j-1$.  Thus  $a_{j-1}  = 4  \times
  3^{j-1}N+1$.  Consequently, by applying  Part \ref{jumppart},
  we derive that the Collatz sequence of $A$ merges with the
  Collatz sequence of $3^{j-1}N$.  Similarly, we get that if
  $r=1$, then  $a_i = 2^{k-2i}  \times 3^iN + 1$,  for $i=1,
  \dots, j$.
\end{enumerate}
\end{enumerate}
\end{proof}


\begin{thm} \label{timefor2plythm}
Let $N$  be an odd  number. Let $n_0  = N$, $m_0 =  2n_0+1 =
2N+1$, and $l_0 = 2m_0+1$ = 4N+3 . Let $n_i, m_i$, and $l_i$
denote  the  subsequence  of  odd  numbers  in  the  Collatz
sequence of  $n_0, m_0,$ and $l_0$,  respectively. Then, for
some integer $r$, $n_{r+1}  = (3n_{r}+1)/2^k$ such that $k >
1$. Let $r$  be the smallest such integer.  Then, $m_{r+2} =
(3m_{r+1}+1)/2^j$ for some $j > 1$, and

\[ \begin{array}{l} 
m_i =  2n_i+1, \mbox{ for }  i \leq r, \\
 m_{{r}+1} = 2^kn_{{r}+1}+1 \\
l_i = 2m_i+1 \mbox{ for } i \leq r +1, \\
l_{r +2} = 2^jm_{r +2}+1
\end{array}
\]

If $k=2$, then $m_i = n_i$ for $i > r +1$. Otherwise, if
$k>2$ then $l_{r +2} = 4m_{r +2}+1$ and $l_i = m_i$ for $i > r+2$.
\end{thm}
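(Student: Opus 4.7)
The plan is to track the three sequences $n_i$, $m_i$, $l_i$ in parallel, using the elementary identity that for any odd $x$, if $y = 2x+1$ then $3y+1 = 2(3x+1)$. This means a single Collatz step applied to $y$ pulls out one extra factor of $2$ compared to the corresponding step applied to $x$, and combined with the residue of $n_i \bmod 4$ this controls whether the relation $m_i = 2n_i+1$ persists.

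The first step is a one-step propagation lemma: if $x \equiv 3 \pmod 4$, write $x = 4a+3$ and $y = 2x+1 = 8a+7$; then $y \equiv 3 \pmod 4$ as well, and a direct calculation gives $(3y+1)/2 = 12a+11 = 2(6a+5)+1 = 2\bigl((3x+1)/2\bigr)+1$. Inducting this on $i = 0, \dots, r-1$, where each $n_i \equiv 3 \pmod 4$ by the minimality of $r$ combined with Proposition \ref{tailsthm} part \ref{2kpart}, yields $m_i = 2n_i+1$ throughout this range. For $i = r$, write $n_r = 4a+1$; then $m_r = 8a+3 \equiv 3 \pmod 4$, so $m_{r+1} = (3m_r+1)/2 = 12a+5$. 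Since $2^k n_{r+1} = 3n_r+1 = 12a+4$, I identify $m_{r+1} = 2^k n_{r+1}+1$, which is $\equiv 1 \pmod 4$ because $k \geq 2$.

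Next, the $l$-sequence stands to $m$ exactly as $m$ stood to $n$, but with the \emph{critical halving index} shifted by one, since $m_i \equiv 3 \pmod 4$ now holds for all $i \leq r$ (not just $i \leq r-1$). Applying the same propagation argument to the pair $(m_i, l_i)$ therefore gives $l_i = 2m_i+1$ for $i \leq r+1$ and $l_{r+2} = 2^j m_{r+2}+1$, with the same $j$ determined by $m_{r+2} = (3m_{r+1}+1)/2^j$; the inequality $j \geq 2$ follows from $m_{r+1} \equiv 1 \pmod 4$.

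For the last assertion I would split on $k$. If $k = 2$, then $m_{r+1} = 4n_{r+1}+1$ is a height-$1$ jump from $n_{r+1}$, so Proposition \ref{tailsthm} part \ref{jumppart} gives $m_i = n_i$ for all $i > r+1$. If $k > 2$, expand $3m_{r+1}+1 = 4(3 \cdot 2^{k-2} n_{r+1}+1)$; since $k-2 \geq 1$ the second factor is odd, forcing $j = 2$ exactly, hence $l_{r+2} = 4m_{r+2}+1$, and the same proposition gives $l_i = m_i$ for $i > r+2$. The only real difficulty is the bookkeeping: keeping straight which sequence has just hit its critical step, and confirming that the exponent $j$ equals $2$ precisely when $k > 2$.
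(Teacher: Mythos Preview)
Your proposal is correct and follows essentially the same route as the paper: both arguments reduce to the identity $3(2x+1)+1 = 2(3x+1)$, induct on the steps $i \le r$ where $n_i \equiv 3 \pmod 4$, compute $m_{r+1} = 2^k n_{r+1}+1$ directly, repeat the whole argument verbatim for the pair $(m,l)$ with the critical index shifted to $r+1$, and finish with Proposition~\ref{tailsthm}\,(\ref{jumppart}). Your isolation of the propagation lemma and the explicit observation that $j=2$ exactly when $k>2$ are slightly cleaner packaging, but the content is the same.
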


\begin{proof} 
By part (\ref{2kpart})  of Proposition \ref{tailsthm}, there
exists an integer $r$, such that $n_{r +1} = (3n_{r}+1)/2^k$
where $k  > 1$.  Since $r$ is  the smallest such  number, we
have  $n_i= \frac{3n_{i-1}+1}{2}$  for  $0 \leq  i \leq  r$.
Therefore,  substituting  $ n_1  =  (3n_0+1)/2$  and $m_0  =
2n_0+1$, we get
\[ \frac{3m_0+1}{2}= \frac{3(2n_0+1)+1}{2} = 3n_0+2 = 2 \left (\frac{3n_0+1}{2} \right ) + 1 =
2n_1+1.
 \] 

Now $(3m_0+1)/2$ is odd because $2n_1+1$ is an odd number. Therefore,
$m_1 = (3m_0+1)/{2} = 2n_1 + 1$. Repeating the above argument we
prove that $m_i = 2n_i + 1$ for $i \leq r$.

Since
\[ \begin{array}{llll}
3m_{r}+1 & = & 3(2n_{r}+1)+1 & (\mbox{substituting }
m_{r} = 2n_{r}+1)\\ \\ & = & 2(3n_{r}+1)+2 &
(\mbox{rearranging terms} )\\ \\ & = & 2( 2^k n_{r+1})+2 &
(\mbox{substituting }3n_{r}+1 = 2^k n_{r+1})
\end{array}
\]

we get $(3m_{r}+1)/{2} = 2^k n_{r+1}+1$ is an odd number.  Therefore
$m_{r+1} = (3m_{r}+1)/{2} = 2^k n_{r+1}+1$.  Consequently,
$(3m_{r+1}+1)$ is divisible by $4$, hence $m_{r+2} =
(3m_{r+1}+1)/{2^j}$ such that $j > 1$.

If $k=2$, $m_{r+1} = 4  n_{r+1}+1$, hence $m_i = n_i$ for $i
> r+1$ by part (\ref{jumppart}) of Proposition \ref{tailsthm}.

Again, since $l_0 = 2m_0 + 1$ and $m_i = (3m_{i-1}+1)/{2}$ for $i
\leq r+1$, we repeat the above arguments to conclude that $l_i =
2m_i+1$ for $0 \leq i \leq r+1$.

Since

\[
\frac{3l_{r+1} + 1}{2} = \frac{3(2m_{r+1}+1) + 1}{2} = 3m_{r+1}+2,
\]

and  $3m_{r+1}+2$ is an odd number, we get 

\begin{equation} \label{cip2eqn}
l_{r+2} = \frac{3l_{r+1} + 1}{2} = 3m_{r+1}+2.
\end{equation}

Now 
\[
\frac{3m_{r+1} + 1}{4} = \frac{3(2^kn_{r+1}+1)+1}{4} = 3 \times 2^{k-2}n_{r+1}+1. 
\]

If  $k>2$, $3 \times 2^{k-2}n_{r+1}+1$ is odd. Therefore $m_{r+2} =
\frac{3m_{r+1} + 1}{4}$. Consequently,
\begin{equation} \label{bip2eqn}
4m_{r+2}+1 = 3m_{r+1}+2.
\end{equation}

From  equations  \ref{cip2eqn}  and  \ref{bip2eqn},  we  get
$l_{r+2} =  4m_{r+2}+1$. Again, by  part (\ref{jumppart}) of
Proposition \ref{tailsthm}, we get $l_{i} = m_i$,  for $i >
r+2$.
\end{proof}

\begin{exm} {\em  We illustrate Theorem \ref{timefor2plythm} with an example.
Let $A=3$, $n_0=A=3$ $m_0= 2n_0+1=7$, and $l_0=2m_0+1 = 15$. Let $n_i, m_i, l_i$ denote the odd numbers in the Collatz sequence of $n_0$, $m_0$, and $l_0$, respectively. Thus 
\[
\begin{array}{l}
n_i: 3, 5, 1 \\
m_i: 7, 11, 17, 13, 5, 1 \\
l_i: 15, 23, 35, 53, 5, 1 
\end{array}
\]

Since    $n_2    =    \frac{3n_1+1}{2^4}=1$,   by    Theorem
\ref{timefor2plythm},
\[\begin{array}{l}
m_i = 2n_i+1  \mbox{ for  } 0 \leq i \leq 1, \\
m_2 = 2^4n_2+1 = 17 \\
l_i = 2m_i+1 \mbox{ for } 0 \leq i \leq 2, \\
l_3 = 4m_3+1 = 53, \mbox{ and } \\
l_i = m_i \mbox{ for } i \geq 4.
\end{array}
\]

Thus, in this case, the Collatz sequence of $15$ merges with
the Collatz sequence of $7$. } \end{exm}

We  conclude this  section  by observing  the following.  If
$A>1$ is an odd number,  then write $A=2^kN+1$, such that, $N$
is an odd  number and $k \geq  1$. If $k = 2$  then, by Part
\ref{jumppart}  of Proposition  \ref{tailsthm},  the Collatz
sequences of  $A$ and $N$ are  equivalent. If $k  > 2$, then
the odd  number $p$  that occurs after  $A$, in  the Collatz
sequence of  $A$, is $p=(3A+1)/4$. This  implies, by Theorem
\ref{timefor2plythm},   that  the  number   occurring  after
$2A+1$,  in  the  Collatz  sequence of  $2A+1$,  is  $4p+1$.
Consequently, the  Collatz sequences  of $A$ and  $2A+1$ are
equivalent.   Finally,   if    $k=1$,   then,   by   Theorem
\ref{timefor2plythm},  the Collatz  sequence  of $A$  either
merges with the Collatz sequence of $N$ or $2A+1$.


\section{The collaborative convergence  of the reverse Collatz sequences of $A$, $4A+1$, and $2A+1$. } \label{reverse}

Let $A$ be an odd number.  In this section, we show that, if
$A  \equiv  0$ mod  $3$,  then  either  the reverse  Collatz
sequence of $2A+1$  or $4A+1$ converge. If $A  \equiv 1$ mod
$3$, then  $2A+1 \equiv 0$  mod $3$, and either  the reverse
Collatz sequence  of $A$ or $4A+1$ converge.  Finally, If $A
\equiv 2$ mod $3$, then  $4A+1 \equiv 0$ mod $3$, and either
the reverse Collatz sequence of $A$ or $2A+1$ converge.

We  say that  the reverse  Collatz sequence  of $A$  is {\em
  trivial}  if $A$  is the  only odd  number in  its reverse
Collatz sequence. We now record some properties of a reverse
Collatz sequence given in Proposition 1 of \cite{ahmed}.

\begin{prop} \label{somerevcollprop}
Let $A$ be an odd number.
\begin{enumerate}
\item \label{trivialrevcollpart} If  $A$ is divisible by $3$
  then the reverse Collatz sequence of $A$ is trivial.

\item \label{reversepicompart}
 Let $p_i$ denote the subsequence of odd
  numbers in the reverse Collatz sequence of $A$. If $p_i \equiv 0$
  mod $3$, then $p_{i+1}$ do not exist. Otherwise, $p_{i+1}$ is the
  smallest odd number before $p_i$ in any Collatz sequence and
\begin{equation} \label{reversecollapieqn}
p_{i+1} = \left \{ \begin{array}{l}
\frac{2p_i-1}{3} \mbox{ if } p_i \equiv 2 \mbox{ mod } 3 \\ \\
\frac{4p_i-1}{3} \mbox{ if } p_i \equiv 1 \mbox{ mod } 3 
\end{array}
\right .
\end{equation}

\item The only odd number that can be a jump from another odd number
  in a reverse Collatz sequence is the starting number of the reverse
  Collatz sequence.

\end{enumerate}
\end{prop}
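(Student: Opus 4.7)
The plan is to handle the three parts in sequence: parts 1 and 2 reduce to elementary case analysis on the reverse Collatz recursion and the cycle of $2^m \pmod 3$, while part 3 is deduced from the minimality in part 2 together with Proposition \ref{tailsthm}(\ref{jumppart}).

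For part 1, if $A$ is odd and $3 \mid A$, the rule produces $r_1 = 2A$, and for every $m \geq 0$ the term $2^m A$ is divisible by $3$ and hence never $\equiv 1 \pmod 3$. So the rule perpetually doubles and $A$ is the only odd term. For part 2, starting from an odd $p_i$ the rule first gives $2p_i$ and then continues doubling until the even term is $\equiv 1 \pmod 3$. Since $2^m \pmod 3$ alternates between $2$ and $1$, the smallest $m \geq 1$ with $2^m p_i \equiv 1 \pmod 3$ is $m = 1$ when $p_i \equiv 2 \pmod 3$ and $m = 2$ when $p_i \equiv 1 \pmod 3$; no such $m$ exists when $p_i \equiv 0 \pmod 3$, recovering part 1. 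At that stage the rule subtracts $1$ and divides by $3$, yielding $p_{i+1} = (2^m p_i - 1)/3$, which is odd because $2^m p_i - 1$ is odd. This produces the two displayed formulas. The ``smallest odd predecessor'' statement follows because any odd $q$ whose immediate odd Collatz successor is $p_i$ satisfies $3q+1 = 2^m p_i$ with $m = \nu_2(3q+1) \geq 1$, and minimizing $q = (2^m p_i - 1)/3$ is equivalent to minimizing this $m$, which is exactly what the above analysis has done.

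For part 3, suppose toward a contradiction that for some $i \geq 0$ the odd number $p_{i+1}$ is a jump from an odd number $Q$ of height $d \geq 1$. Then $p_{i+1} = 4^d Q + (4^d - 1)/3 > Q$, and by Proposition \ref{tailsthm}(\ref{jumppart}) the Collatz sequences of $Q$ and $p_{i+1}$ are equivalent, so they share their second odd number, which for $p_{i+1}$ is $p_i$. Hence $Q$ is an odd integer with $3Q+1 = 2^{m'} p_i$ for some $m' \geq 1$ and $Q < p_{i+1}$, contradicting the minimality characterization of $p_{i+1}$ established in part 2.

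The main obstacle is the reduction in part 3: one needs to recognize that a hypothetical jump from $p_{i+1}$ to a smaller odd $Q$ translates, via the equivalence of Proposition \ref{tailsthm}(\ref{jumppart}), into an odd predecessor of $p_i$ that is strictly smaller than $p_{i+1}$, contradicting part 2. Parts 1 and 2 are straightforward modular bookkeeping once one tracks the cycle of $2^m \pmod 3$.
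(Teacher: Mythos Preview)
The paper does not give a proof of this proposition at all; it merely records the statement and cites Proposition~1 of \cite{ahmed}. Your argument is therefore not a comparison target in the usual sense, but it is correct and supplies exactly the details the present paper omits.

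A few remarks. Your treatment of parts~1 and~2 is the natural one: track $2^m p_i \pmod 3$ until the rule switches from doubling to $(r-1)/3$, and observe that the resulting quotient is automatically odd. The minimality claim in part~2 is handled cleanly by noting that every odd Collatz-predecessor $q$ of $p_i$ satisfies $q=(2^m p_i-1)/3$ for some admissible $m\ge 1$, and that this expression is increasing in $m$, so the least admissible $m$ gives the least $q$. For part~3 your reduction is the right idea: if some $p_{i+1}$ were a jump from an odd $Q$, then by Proposition~\ref{tailsthm}(\ref{jumppart}) the odd number following $Q$ in its Collatz sequence equals the odd number following $p_{i+1}$, namely $p_i$; since $Q<p_{i+1}$, this contradicts the minimality of $p_{i+1}$ established in part~2. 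The only implicit step you are using is that the forward odd successor of $p_{i+1}$ is indeed $p_i$, which follows immediately from $3p_{i+1}+1=2p_i$ or $4p_i$ in the two cases of part~2; it would not hurt to say this explicitly.
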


\begin{corollary} \label{howtocomoutepiandjumppart}
 Let $A$ be an odd number and let $B = 2^kA+1$ such that $k
 \geq 1$.  Let $p_i$ denote  the sequence of odd  numbers in
 the reverse  Collatz sequence of $p_0=B$. Then
  
\[ p_{1} = \left \{ \begin{array}{ll}
2^{k+2} (\frac{A}{3}) +1 & \mbox{ if } A  \equiv 0
 \mbox{ mod } 3; \\ \\
 2( \frac{2^kA-1}{3})+1 & \mbox{ if } A \not \equiv 0
 \mbox{ mod } 3.
\end{array}
\right .
\]

\end{corollary}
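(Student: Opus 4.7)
My plan is to apply the formulas of Proposition \ref{somerevcollprop} part \ref{reversepicompart} directly to $p_0 = B = 2^k A + 1$, splitting into cases based on $B \bmod 3$. Since $2 \equiv -1 \pmod{3}$, we have $2^k \equiv (-1)^k \pmod{3}$, so $B \equiv (-1)^k A + 1 \pmod{3}$; this reduces the problem to a small finite case analysis on the pair $(A \bmod 3,\; k \bmod 2)$.

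In the case $A \equiv 0 \pmod{3}$, the computation above gives $B \equiv 1 \pmod{3}$ regardless of the parity of $k$, so the upper branch of equation \ref{reversecollapieqn} applies. A short substitution,
\[ p_1 \;=\; \frac{4B - 1}{3} \;=\; \frac{4(2^k A + 1) - 1}{3} \;=\; \frac{2^{k+2} A + 3}{3} \;=\; 2^{k+2}\!\left(\frac{A}{3}\right) + 1, \]
recovers the first branch of the claimed formula.

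In the case $A \not\equiv 0 \pmod{3}$, the premise that $p_1$ exists forces $B \not\equiv 0 \pmod{3}$ by part \ref{trivialrevcollpart} of Proposition \ref{somerevcollprop}. A check of the four possibilities for $(A \bmod 3,\; k \bmod 2)$ shows that in the two surviving subcases one has $B \equiv 2 \pmod{3}$, and in particular $2^k A \equiv 1 \pmod{3}$, so $(2^k A - 1)/3$ is an integer. Applying the lower branch of equation \ref{reversecollapieqn} then yields
\[ p_1 \;=\; \frac{2B - 1}{3} \;=\; \frac{2(2^k A + 1) - 1}{3} \;=\; \frac{2^{k+1} A + 1}{3} \;=\; 2\!\left(\frac{2^k A - 1}{3}\right) + 1, \]
matching the second branch of the claimed formula.

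The argument is essentially bookkeeping: the only mildly delicate point is verifying divisibility of $2^k A - 1$ by $3$ in the non-trivial subcases of $A \not\equiv 0 \pmod{3}$, and this is immediate from the parity analysis of $2^k$ modulo $3$. I therefore do not anticipate a real obstacle, and the proof reduces to invoking Proposition \ref{somerevcollprop} together with this finite case split.
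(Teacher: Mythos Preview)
Your proof is correct and follows essentially the same approach as the paper: both determine $B \bmod 3$ via a case split on $(A \bmod 3,\, k \bmod 2)$, note that $p_1$ fails to exist when $B \equiv 0 \pmod 3$, and then apply the appropriate branch of Proposition~\ref{somerevcollprop} part~\ref{reversepicompart} with the same algebraic simplifications. The only cosmetic difference is your use of $2^k \equiv (-1)^k \pmod 3$ to organize the case analysis more compactly.
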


{\em Proof.}

If $A  \not \equiv 0$  mod $3$, consider the  four following
situations: $k$ is even and  $A \equiv 1$ mod $3$ implies $B
\equiv 2$  mod $3$;  $k$ is  even and $A  \equiv 2$  mod $3$
implies $B  \equiv 0$ mod $3$;  $k$ is odd and  $A \equiv 1$
mod $3$  implies $B  \equiv 0$  mod $3$; $k$  is odd  and $A
\equiv 2$ mod $3$ implies $B \equiv 2$ mod $3$.

By    part     (\ref{reversepicompart})    of    Proposition
\ref{somerevcollprop}, $p_1$  exists only if  $B \not \equiv
0$ mod $3$. Consequently, when $B \equiv 2$ mod $3$, by part
(\ref{reversepicompart})            of           Proposition
\ref{somerevcollprop},
\[    p_{1}    =   \frac{2B-1}{3}    =
\frac{2^{k+1}A  +   1}{3}  =  \frac{2^{k+1}A-2+3}{3}   =  2(
\frac{2^kA-1}{3})+1. \]

 Finally, if  $A \equiv  0$ mod $3$,  then $B \equiv  1$ mod
 $3$.   Therefore,  by   part   (\ref{reversepicompart})  of
 Proposition \ref{somerevcollprop},

\[ p_1 = \frac{4B-1}{3} =   \frac{2^{k+2}A + 3}{3} = 2^{k+2} (\frac{A}{3}) +1.\]

\qed

\begin{thm} \label{multiply2reversetill4thm}
Let $A$  be an  odd number and  let $p_i$, $q_i$,  and $f_i$
denote the subsequence of odd numbers in the reverse Collatz
sequence of $A$, $2A+1$, and $4A+1$, respectively.

\begin{enumerate}
 
\item If $A \equiv 0$ mod $3$, write $A = 3^nB$ such that $n
  \geq 1$, and $B$ is an odd number such that $B \not \equiv
  0$    mod   $3$.    Then,   for    $i=1,2,    \dots,   n$,
  $q_i=2^{2i+1}(3^{n-i}B)+1$ and $f_i = 2^{2i+2}(3^{n-i}B) +
  1$. If $B \equiv 1$ mod  $3$, then $q_n \equiv 0$ mod $3$.
  On  the other hand,  if $B  \equiv 2$  mod $3$,  then $f_n
  \equiv 0$ mod $3$.

\item  Let $A  \not \equiv  0$ mod  $3$. Then  the following
  conditions are true.

\begin{enumerate}

\item \label{notmod3part} For some  $k \geq 1$,  $p_k \not \equiv 2$ mod
  $3$.  If  $p_k \equiv 1$  mod $3$, then $p_k  =  3n+1$  and
  $p_{k+1} = 4n+1$ for some even number $n$. 

\item \label{4aandapart} If $A \equiv 1$  mod $3$, the reverse Collatz sequence
  of  $2A+1$ is  trivial. The  reverse Collatz  sequences of
  both $A$ and  $4A+1$ are not trivial, and  $f_i = 2p_i+1$,
  for $1  \leq i \leq  k$. If $p_k  \equiv 0$ mod  $3$, then
  $f_{k+1} =  2^3 \left (  \frac{p_{k}}{3} \right )  +1$. On
  the  other hand,  if $p_k  \equiv  1$ mod  $3$, then  $f_k
  \equiv 0$ mod $3$.

\item \label{deffiinas2pipart} Finally,  if $A \equiv 2$ mod
  $3$, then the reverse  Collatz sequence of $4A+1 \equiv 0$
  is trivial. The reverse  Collatz sequences of both $A$ and
  $2A+1$ are not trivial, and  $q_i = 2p_i+1$, for $0 \leq i
  \leq k$.  If $p_k \equiv 0$  mod $3$, then  $q_{k+1} = 2^3
  \left (  \frac{p_{k}}{3} \right ) +1$. On  the other hand,
  if $p_k \equiv 1$ mod $3$, then $q_k \equiv 0$ mod $3$.
\end{enumerate}
\end{enumerate}
\end{thm}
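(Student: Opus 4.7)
The plan is to prove each part by iterating the two-case recurrence for $p_{i+1}$ given in part~(\ref{reversepicompart}) of Proposition~\ref{somerevcollprop}, combined where needed with the first-step formula of Corollary~\ref{howtocomoutepiandjumppart} and routine arithmetic modulo~$3$.

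For part~1, I would induct on $i$. The base case $i=1$ is an immediate application of Corollary~\ref{howtocomoutepiandjumppart}: since $A = 3^n B \equiv 0 \pmod 3$, taking the corollary's $A$ to be our $A$ and $k$ to be $1$ (resp.~$2$) yields $q_1 = 2^3(3^{n-1}B)+1$ (resp.~$f_1 = 2^4(3^{n-1}B)+1$). For the inductive step, observe that $q_i = 2^{2i+1}(3^{n-i}B)+1$ has exactly the form $2^m M + 1$ with $M = 3^{n-i}B \equiv 0 \pmod 3$ whenever $i<n$, so another application of the corollary gives $q_{i+1} = 2^{2i+3}(3^{n-i-1}B)+1$, and an analogous step handles $f_{i+1}$. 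The residue claims at $i=n$ reduce to the congruences $2^{2n+1} \equiv 2$ and $2^{2n+2} \equiv 1 \pmod 3$, from which $q_n \equiv 2B+1$ and $f_n \equiv B+1 \pmod 3$.

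For part~2(a), I run the $p_i$ recurrence forward: so long as $p_i \equiv 2 \pmod 3$, the formula $p_{i+1} = (2p_i-1)/3 < p_i$ forces strict descent in positive integers, so some first index $k \geq 1$ must satisfy $p_k \not\equiv 2 \pmod 3$. If $p_k \equiv 1 \pmod 3$, write $p_k = 3n+1$; since $p_k$ is odd, $n$ is even, and $p_{k+1} = (4p_k-1)/3 = 4n+1$ as claimed. The triviality statements in~2(b) and~2(c) are then immediate from part~(\ref{trivialrevcollpart}) of Proposition~\ref{somerevcollprop}, via $2A+1 \equiv 0 \pmod 3$ when $A \equiv 1$ and $4A+1 \equiv 0 \pmod 3$ when $A \equiv 2$. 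The key structural identity driving the rest is that the relation ``$y = 2x+1$'' is preserved by the $x \equiv 2 \pmod 3$ branch of the recurrence: if $x \equiv 2 \pmod 3$ then $y \equiv 2 \pmod 3$ as well, and one checks directly that $(2y-1)/3 = 2((2x-1)/3)+1$. In case~2(c), $p_0 \equiv 2 \pmod 3$ and $q_0 = 2p_0+1$ already sit in this regime, so straightforward induction gives $q_i = 2p_i+1$ for all $0 \leq i \leq k$. In case~2(b), $p_0 \equiv 1 \pmod 3$ and $f_0 = 4A+1 \neq 2p_0+1$, but a single calculation with Corollary~\ref{howtocomoutepiandjumppart} establishes the base case $f_1 = 2p_1+1$, after which the same induction covers $1 \leq i \leq k$.

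The behaviour at $i=k$ is then read off the residue of $p_k$: if $p_k \equiv 0 \pmod 3$, then $f_k = 2p_k+1 \equiv 1 \pmod 3$ (and analogously for $q_k$), so one further term exists via the $(4x-1)/3$ branch of the recurrence and evaluates to $(4(2p_k+1)-1)/3 = 2^3(p_k/3)+1$; if $p_k \equiv 1 \pmod 3$, then $f_k = 2p_k+1 \equiv 0 \pmod 3$ (analogously $q_k \equiv 0$), ending the sequence. I expect the main obstacle to be purely organizational rather than mathematical: juggling three simultaneous sequences with different starting residues and the two distinct off-by-one conventions ($0 \leq i \leq k$ in~2(c) versus $1 \leq i \leq k$ in~2(b)) invites transcription errors, but no single step is deep once the ``$y = 2x+1$'' intertwining above is isolated.
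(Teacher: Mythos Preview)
Your proposal is correct and follows essentially the same approach as the paper: both arguments iterate Corollary~\ref{howtocomoutepiandjumppart} for part~1, use strict descent for part~2(a), and for parts~2(b)--(c) establish the base case $f_1 = 2p_1+1$ (resp.\ $q_0 = 2p_0+1$) and then induct via the identity $(2y-1)/3 = 2((2x-1)/3)+1$ when $y = 2x+1$ and $x \equiv 2 \pmod 3$. Your isolation of this intertwining as a standalone observation is a slightly cleaner packaging of what the paper computes inline, but the two proofs are the same in substance.
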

 
\begin{proof}  \hfill

\begin{enumerate}

\item If $A \equiv 0$ mod $3$, since $q_0=2A+1 \equiv 1$ mod
  $3$  and $f_0  =  4A+1  \equiv 1$  mod  $3$, by  Corollary
  \ref{howtocomoutepiandjumppart}, we get $q_1 = 2^3 \left (
  \frac{A}{3} \right ) + 1 = 2^3(3^{n-1}B)+1$ and $f_1 = 2^4
  \left  ( \frac{A}{3}  \right )  + 1  =  2^4(3^{n-1}B)+1 $.
  Observe  that,  if  $n  >1$,  both  $q_1$  and  $f_1$  are
  equivalent  to $1$  mod  $3$. Hence,  again, by  Corollary
  \ref{howtocomoutepiandjumppart},    we    get    $q_2    =
  2^5(3^{n-2}B)+1$ and $f_1  = 2^6(3^{n-2}B)+1 $. Continuing
  the  same   argument,  we  get  for   $i=1,2,  \dots,  n$,
  $q_i=2^{2i+1}(3^{n-i}B)+1$ and $f_i = 2^{2i+2}(3^{n-i}B) +
  1$. Now, if $n$ is odd and $B \equiv 1$ mod $3$, then $q_n
  = 2^{2n+1}B +1 \equiv 0$ mod  $3$. If $n$ is even and if
  $B \equiv  2$ mod $3$,  then $f_n = 2^{2n+2}B+1  \equiv 0$
  mod $3$.

\item Let $A \not \equiv 0$ mod $3$.
\begin{enumerate} 

\item  If   $p_i  \equiv  2$   mod  $3$,  then   $p_{i+1}  =
  (2p_i-1)/3$, by part \ref{reversepicompart} of Proposition
  \ref{somerevcollprop}. Therefore  $p_{i+1} < p_i$.  If all
  the odd terms  of the reverse Collatz sequence  of $A$ are
  equivalent to  $2$ mod $3$  then the sequence  is strictly
  decreasing.  The  smallest such  odd  number  that such  a
  sequence can reach is $5$,  in which case, the next number
  in the sequence is $3$, a contradiction. Hence, there is a
  $k$ such that $p_k \not  \equiv 2$ mod $3$. If $p_k \equiv
  1$ mod $3$,  and since $p_k$ is odd,  $p_k =3n+1$ for some
  even number $n$.  Moreover, by part \ref{reversepicompart}
  of Proposition \ref{somerevcollprop}, $p_{k+1} = (4p_k-1)/3
  = 4n+1$.

\item  By  Part  (\ref{trivialrevcollpart})  of  Proposition
  \ref{somerevcollprop},  the  reverse  Collatz sequence  is
  trivial if the starting number is a multiple of $3$. If $A
  \equiv 1$ mod  $3$, then $2A+1 \equiv 0$  mod $3$ and $4A+1
  \equiv  2$  mod   $3$.  Therefore,  the  reverse  Collatz
  sequence  of  $2A+1$  is  trivial,  whereas,  the  reverse
  Collatz sequences of both  $A$ and $4A+1$ are not trivial.
  By    part    (\ref{reversepicompart})   of    Proposition
  \ref{somerevcollprop}, $p_1 =(4A-1)/3$, and
  $f_1 =  (2f_0-1)/3 =  (2(4A+1)-1)/3 = (8A+1)/3$.  Hence \[
  2p_1+1  =  2  \left  (  \frac{4A-1}{3}  \right  )  +  1  =
  \frac{8A+1}{3} = f_1.
\]

We  use induction  to prove  that  $f_i =  2p_i+1$, if  $p_i
\equiv 2$,  for $i > 1$.  Assume the hypothesis  is true for
$i=r$, that  is $p_r \equiv 2$  mod $3$ and  $f_r = 2p_r+1$.
Consequently,  $f_r \equiv  2$ mod  $3$. Therefore,  by part
\ref{reversepicompart} of Proposition \ref{somerevcollprop},
$p_{r+1} =  (2p_r -1)/3$ and  $f_{r+1} = (2f_r  -1)/3$. This
implies

\[
2p_{r+1}+1 = 2 \left ( \frac{2p_r-1}{3} \right )+1 = \frac{4p_r+1}{3}
= \frac{2(2p_r+1)-1}{3} = \frac{2f_r-1}{3} = f_{r+1}.
\]

Therefore, it follows by  induction that $f_i = 2p_i+1$ for
$1 \leq i \leq k$.

If $p_k \equiv 0$ mod $3$, then since $f_k = 2p_k+1$, we get
by Corollary \ref{howtocomoutepiandjumppart} that $f_{k+1} =
2^3 \left  ( \frac{p_{k}}{3} \right ) +1$.  Clearly, if $p_k
\equiv 1$ mod $3$, then $f_k \equiv 0$ mod $3$.

\item If $ A \equiv 2$ mod $3$, then $q_0=2A+1 \equiv 2$ mod
  $3$ and  $4A+1 \equiv 0$  mod $3$. Therefore,  the reverse
  Collatz  sequence  of  $4A+1$  is  trivial,  whereas,  the
  reverse Collatz  sequences of both $A$ and  $2A+1$ are not
  trivial.  By part (\ref{reversepicompart})  of Proposition
  \ref{somerevcollprop},  $p_1  =   (2A-1)/3$,  and  $q_1  =
  (2q_0-1)/3 =  (2(2A+1)-1)/3 = (4A+1)/3$.  Consequently, \[
  2p_1+1  =  2  \left  (  \frac{2A-1}{3}  \right  )  +  1  =
  \frac{4A+1}{3} = q_1.
\]
We repeat the  induction argument in part (\ref{4aandapart})
to  get $q_i  =2p_i+1$  for $0  \leq  i \leq  k$. Again,  by
Corollary \ref{howtocomoutepiandjumppart}, if $p_k \equiv 0$
mod $3$,  since $q_k = 2p_k+1$,  we get that  $q_{k+1} = 2^3
\left ( \frac{p_{k}}{3} \right )  +1$. On the other hand, if
$p_k \equiv 1$ mod $3$, then $q_k \equiv 0$ mod $3$.
\end{enumerate}
\end{enumerate}
\end{proof}


\section{Reduction of the Collatz conjecture   to  odd numbers that are congruent to $21$ mod $24$.} \label{reduce21section}
 Let $A$ be an odd number. In this section, we show that the
 Collatz sequence of $A$ is  equivalent to an integer of the
 form $24t+21$ where $t \geq 0$.

\begin{thm} \label{12a+pthm}

Let $A>1$ be  an odd number and let  $a_i$ denote the sequence
of odd  numbers in the Collatz  sequence of $A$  with $a_0 =
A$.  Construct  a  sequence  $b_i$ such  that

 \[b_0 = \left \{ \begin{array}{llllll}
12 \left ( 2^4\left (\frac{A}{3} \right )+1 \right ) + 9  & \mbox{if } A \equiv 0 \mbox{ mod } 3; \\ \\
12 \left ( \frac {4A-1}{3}   \right ) + 9 & \mbox{if } A \equiv 1 \mbox{ mod } 3;
 \\ \\
12 \left ( \frac{2A-1}{3}   \right ) + 9 & \mbox{if } A \equiv 2 \mbox{ mod } 3.
\end{array} \right .
\] 
 If for any $i$, $a_i =1$,  then $b_i=12 \times 1 + 9 = 21$.
 Let $i \geq 1$ and let $a_{i-1} > 1$. If $a_{i-1} \equiv 3$
 mod  $4$ then  let $b_i  = 12  \left  ( \frac{a_{i-1}-1}{2}
 \right  )+9$.   If  $a_{i-1}   \equiv  1$  mod   $4$,  then
 $a_{i-1}=2^kN+1$ for  some odd number $N$, and  $k \geq 2$.
 If $k >2$, then $b_i=12a_{i-1}+9$. If $k=2$, then $a_{i-1}$
 is a jump  from an odd number $P$, such that,  $P$ is not a
 jump from an odd number. If  $P=1$, then $b_i = 12 \times 1
 + 9=  21$. If $P>1$, write  $P=2^rq+1$ such that  $q$ is an
 odd number. Then, $r \not = 2$. If $r=1$, let $b_i=12 \left
 ( \frac{P-1}{2} \right ) +9$. Otherwise, let $b_i=12P+9$.

Then, for all $i \geq 0$,
\begin{enumerate}
\item  $b_i$ is  odd, $b_i$  is divisible  by $3$,  and $b_i
  \equiv 9$ mod $12$.
\item the  Collatz sequences of $a_i$ and $b_i$ are equivalent.
\item if $a_i > 1$,  write  $a_i = 2^k  n_i + 1$  such that $k \geq  1$ and
  $n_i$ is an odd number. If $k \not = 2$, then
\[
b_{i+1} = \left \{ \begin{array}{lllllll} 3(2a_i+1), & \mbox{ if $k=1$,} \\ \\
3(4a_i+3), & \mbox{ otherwise.}
\end{array} \right .
\]

If $k=2$,  then $a_i$  is a jump  from an odd  number $j_i$,
such  that, $j_i$  is  not a  jump  from an  odd number.  If
$j_i>1$,  write $j_i=2^rq_i+1$  such  that, $r  \geq 1$  and
$q_i$ is an odd number. Then $r \not = 2$, and

\[
b_{i+1} = \left \{ \begin{array}{lllllll} 3(2j_i+1), & \mbox{ if $r=1$,} \\ \\
3(4j_i+3), & \mbox{ otherwise.}
\end{array} \right .
\]

\end{enumerate}
\end{thm}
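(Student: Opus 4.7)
First, Part 1 is immediate from inspection: each defining clause of $b_i$ yields a value $12m + 9$ for some non-negative integer $m$ (integrality of $m$ follows from the modular hypotheses, e.g., $(4A - 1)/3 \in \integers$ when $A \equiv 1 \bmod 3$, and $(a_{i-1} - 1)/2 \in \integers$ when $a_{i-1} \equiv 3 \bmod 4$), and $12m + 9 = 3(4m + 3)$ with $4m + 3$ odd delivers all three stated properties. Part 3 is a direct simplification of the construction: $b_{i+1} = 12(a_i-1)/2 + 9 = 3(2a_i + 1)$ when $a_i \equiv 3 \bmod 4$, $b_{i+1} = 12 a_i + 9 = 3(4a_i + 3)$ when $k > 2$, and analogously with $j_i$ substituted for $a_i$ when $k = 2$ (the maximality of the jump forces $r \neq 2$).

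Part 2 is the substance, and the plan is to exhibit $b_i$ as $4 a_i + 1$ or $16 a_i + 5 = 4(4 a_i + 1) + 1$ in all cases and then invoke Proposition \ref{tailsthm} part \ref{jumppart} (equivalence of Collatz$(A)$ and Collatz$(4A+1)$) one or two times. Where this direct computation does not suffice, as in the base case $A \equiv 2 \bmod 3$, I fall back on an additional $x \mapsto 2x+1$ step, whose equivalence-preserving property comes from the closing discussion of Section \ref{2^ksection} (which rests on Theorem \ref{timefor2plythm}).

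For the base case $i = 0$: when $A \equiv 0 \bmod 3$, $b_0 = 64 A + 21 = 4(4(4A+1)+1)+1$ is three iterations of $x \mapsto 4x+1$ applied to $A$; when $A \equiv 1 \bmod 3$, $b_0 = 16 A + 5 = 4(4A+1)+1$ is two iterations; when $A \equiv 2 \bmod 3$, $b_0 = 8 A + 5 = 4(2A+1)+1$ is one iteration applied to $2A+1$, whose Collatz sequence is equivalent to that of $A$ by Section \ref{2^ksection}. For the general step ($i \geq 1$ with $a_{i-1} > 1$): when $k = 1$, write $a_{i-1} = 4t+3$, so $a_i = 6t+5$ and $b_i = 24t+21 = 4 a_i + 1$; when $k > 2$, Proposition \ref{tailsthm} part \ref{divby2^3part} gives $a_i = (3 a_{i-1} + 1)/4$, and $b_i = 12 a_{i-1} + 9 = 16 a_i + 5$; when $k = 2$, I use the jump decomposition to reduce $a_{i-1}$ to $j_i$, then apply the same analysis to $j_i = 2^r q_i + 1$, obtaining $a_i = 3 q_i + 2$ with $b_i = 4 a_i + 1$ for $r = 1$, or $a_i = 3 \cdot 2^{r-2} q_i + 1$ with $b_i = 16 a_i + 5$ for $r \geq 3$. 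The terminal cases $a_i = 1$ and $j_i = 1$ give $b_i = 21 = 4(4 \cdot 1 + 1) + 1$, equivalent to Collatz$(1)$.

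The hardest step will be the $k = 2$ jump case: the decomposition $a_{i-1} = 4^d j_i + (4^d - 1)/3$ must be carefully unwound to express $a_i$ (the odd successor of $a_{i-1}$, which by Proposition \ref{tailsthm} part \ref{jumppart} coincides with the odd successor of $j_i$) in terms of $q_i$ and $r$, and then matched against the formula for $b_i$ to confirm the pattern $b_i = 4 a_i + 1$ or $16 a_i + 5$. A secondary subtlety is the base case for $A \equiv 2 \bmod 3$, where strict equivalence between Collatz$(A)$ and Collatz$(2A+1)$ may fail; one must instead rely on the loose (merging) equivalence established at the end of Section \ref{2^ksection} via Theorem \ref{timefor2plythm} to chain $b_0$ back to $a_0$.
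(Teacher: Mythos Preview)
Your approach is essentially the paper's: exhibit each $b_i$ as an iterated jump $x\mapsto 4x+1$ from $a_i$ (height one or two) and then invoke Proposition~\ref{tailsthm}\,(\ref{jumppart}); the paper merely dresses the base case in the reverse-Collatz notation of Corollary~\ref{howtocomoutepiandjumppart} (writing the parenthesized integer as a $p_1$) before reaching the same jump identification, and it treats the $k=2$ case exactly as you do by passing to the non-jump base $P$ (your $j_i$) and splitting on $r$.

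One place where you are actually more careful than the paper: when $A\equiv 2\bmod 3$ the paper asserts $b_0$ is ``a jump of height $1$ from $A$,'' but as you compute, $b_0=8A+5=4(2A+1)+1$ is a jump from $2A+1$, not from $A$, and strict equivalence of $\mathrm{Collatz}(A)$ with $\mathrm{Collatz}(2A+1)$ can genuinely fail (take $A=5$: the second odd terms are $1$ and $17$). Your fallback to the merging relationship from the end of Section~\ref{2^ksection} is therefore the honest repair; the paper's argument simply glosses over this gap.
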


\begin{proof}
\begin{enumerate}
\item It follows  by definition that $b_i$ is  odd, $b_i$ is
  divisible by $3$, and $b_i \equiv 9$ mod $12$.

\item When  $A \equiv 0$ mod  $3$, let $p_1$  denote the odd
  number  that occurs  after $4A+1$  in the  reverse Collatz
  sequence     of      $4A+1$.     Then     by     Corollary
  \ref{howtocomoutepiandjumppart},     $p_1    =    2^4\left
  (\frac{A}{3} \right )+1$. We see that
\[
b_0=12p_1+9 =12 \left ( 2^4\left (\frac{A}{3} \right )+1 \right ) + 9 =   4^3A + 4^2+4+1.
\]
Thus, $b_0$ is a jump of height $3$ from $A$. 

Now, let $p_1$ denote the  odd number that comes after $A$ in
the  reverse  Collatz  sequence  of $A$.  Then  by  Equation
(\ref{reversecollapieqn}), we get

\begin{equation*} 
p_{1} = \left \{ \begin{array}{l}
\frac{2A-1}{3}, \mbox{ if } A \equiv 2 \mbox{ mod } 3 \\ \\
\frac{4A-1}{3}, \mbox{ if } A \equiv 1 \mbox{ mod } 3 
\end{array}
\right .
\end{equation*}

Like before,  we see that  $12p_1+9=b_0$ which is a  jump of
height of  $2$ from $A$,  when $A \equiv  1$ mod $3$,  and a
jump of height of $1$ from $A$, when $A \equiv 2$ mod $3$.

Now let  $i \geq 1$. When  $a_{i-1} \equiv 3$  mod $4$, then
$a_i=\frac{3a_{i-1}+1}{2}$. Consequently, 
\[
b_i = 12  \left (  \frac{a_{i-1}-1}{2} \right )+9 = 6a_{i-1}+3= 4a_i+1.
\]
Thus, $b_i$ is a jump of height $1$ from $a_i$.

Finally,  let   $a_{i-1}  \equiv  1$  mod   $4$.  If  $k>2$,
$a_i=\frac{3a_{i-1}+1}{4}$.

Then $4^2a_i+4+1 = 12a_{i-1}+9 = b_i$ is a jump from $a_i$ of height $2$.

When $k=2$  the Collatz sequence of  $a_{i-1}$ is equivalent
to the Collatz sequence of  $P$. By the definition of $P$, $r
\not =  2$. If $r>2$, then  $a_i = (3P+1)/4$,  and if $r=1$,
$a_i = (3P+1)/2$. We repeat  the above argument and get that
$b_i$ is  a jump of height  $2$ from $a_i$  when when $k>2$,
and a jump of height $1$, otherwise.

Thus, for  any $i$, $b_i$ is  a jump from  $a_i$. Hence, the
Collatz sequences of $a_i$ and $b_i$ are equivalent.
\item Since  $a_i$ is odd,  when $k=1$, $a_i \equiv  3$ mod
  $4$. Therefore
\[b_{i+1}= 12 \left ( \frac{a_i-1}{2} \right ) + 9 = 3(2a_i + 1).\] 
When   $k>2$,   $a_i   \equiv   1$   mod   $4$.   Therefore,
$b_{i+1}=12a_i+9=3(4 a_i+3)$.

Let $k=2$, then $r \not =2$ by definition of $j_i$, and
\[
b_{i+1}  =  \left   \{  \begin{array}{lllllll}  12  \left  (
  \frac{j_i-1}{2}  \right )  + 9  = 3(2j_i+1),  &  \mbox{ if
    $r=1$,} \\ \\ 12j_i+9 = 3(4j_i+3), & \mbox{ otherwise.}
\end{array} \right .
\]

\end{enumerate}
\end{proof}

\begin{corollary}
\begin{enumerate}
\item Given an  odd number $a$, the Collatz  sequence of $a$
  is equivalent to an integer  of the form $24t+21$ where $t
  \geq 0$.
\item Consider a number of  the form $24t+21$, where $t \geq
  0$. Let  $p=2t+1$. If $t=0$, the the  Collatz sequences of
  $24t+21$  and $1$  merge. If  $t >  0$,  write $p=2^kn+1$,
  where $k \geq 1$, and $n$  is an odd number. If $k>2$, the
  Collatz  sequences of $24t+21$  and $p$  merge. Otherwise,
  the Collatz sequences of $24t+21$ and $2p+1$ merge.
\end{enumerate}
\end{corollary}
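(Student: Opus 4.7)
Both parts fall out of Theorem \ref{12a+pthm} with only modular bookkeeping; no new mechanism is required.

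For part 1, I would apply Theorem \ref{12a+pthm} to the given odd $a$ and examine $b_0$, which is already equivalent to $a$ in the Collatz sense. The theorem records only that $b_0 \equiv 9 \pmod{12}$, but I would sharpen this to $b_0 \equiv 21 \pmod{24}$ by a three-line case split on $a \bmod 3$: substituting the defining formulas gives $b_0 = 192(a/3) + 21$, $b_0 = 48s + 21$, or $b_0 = 24s + 21$ in the residue classes $0$, $1$, $2$ respectively, each visibly of the form $24t + 21$. Part 1 then follows from Theorem \ref{12a+pthm}(2).

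For part 2 the idea is to run Theorem \ref{12a+pthm} in reverse: view $B = 24t + 21 = 12p + 9$ as the value of some $b_{i+1}$ produced by the theorem, and identify an $a_i$ whose formula delivers $B$. The case $t = 0$ is immediate since the Collatz sequence of $21$ reaches $1$ in a few steps, so it merges with the Collatz sequence of $1$. For $t > 0$ and $p = 2^k n + 1$ I would split on $k$. When $k > 2$, take $a_i = p$; the ``$k > 2$'' branch of Theorem \ref{12a+pthm}(3) yields $b_{i+1} = 3(4a_i + 3) = 12p + 9 = B$, so $B$ is equivalent to the next odd $a_{i+1}$ in the Collatz sequence of $p$, and the two sequences merge. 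When $k \leq 2$, take $a_i = 2p + 1$; since $p$ is odd, $2p + 1 \equiv 3 \pmod{4}$, so in its decomposition $a_i = 2^{k'} N + 1$ we have $k' = 1$. The corresponding branch gives $b_{i+1} = 3(2 a_i + 1) = 3(4p + 3) = B$, which forces the Collatz sequence of $B$ to merge with that of $2p + 1$.

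The only point requiring care is verifying that the chosen $a_i$ in each subcase really lands in the intended branch of Theorem \ref{12a+pthm}(3); this is a one-line mod-$4$ check in each instance, so I do not anticipate a genuine obstacle. Everything else is a direct reading of Theorem \ref{12a+pthm} together with elementary arithmetic.
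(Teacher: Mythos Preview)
Your proposal is correct and follows essentially the same route as the paper. For part~1 the paper observes that Theorem~\ref{12a+pthm} yields an equivalent number of the form $12d+9$ with $d$ odd and then writes $d=2t+1$; your three-case mod~$24$ computation of $b_0$ is just the explicit verification of that oddness claim. For part~2 the paper says only ``direct application of Theorem~\ref{12a+pthm}'', and your argument---choosing $a_0=p$ when $k>2$ and $a_0=2p+1$ when $k\le 2$ so that $b_1=12p+9=24t+21$---is precisely the unpacking of that application.
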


\begin{proof}
\begin{enumerate}
\item  Applying  Theorem  \ref{12a+pthm},  we get  that  the
  Collatz  sequence  of $a$  is  equivalent  to the  Collatz
  sequence of an integer of the form $12d+9$ where $d$ is an
  odd  number. Write  $d=2t+1$ such  that $t  \geq  0$. This
  gives us the desired result.
  
\item  This  result  is  a  direct  application  of  Theorem
  \ref{12a+pthm}.
\end{enumerate}
\end{proof}

\begin{exm} {\em
Let $a_i$ denote the odd  numbers in the Collatz sequence of
$319$, with $a_0=319$. For  each $i$, if $a_i>1$, write $a_i
= 2^kn_i+1$,  where $n_i$  is an odd  number. If  $k>2$ then
$e_i=4a_i+3$.  If $k=1$, then  $d_i=2a_i+1$. If  $k=2$, then
let $p_i$  be an odd number  such that, $p_i$ is  not a jump
from an  odd number. If $p_i=1$, then  $e_i=4 \times 1+3=7$.
If $p_i  > 1$, write $p_i  =2^rq_i+1$ such that  $q_i$ is an
odd number.  Then, if $r=1$, then $d_i=2q_i+1$  and if $r>1$
then  $e_i=4p_i+3$.  Let  $b_i$  be defined  as  in  Theorem
\ref{12a+pthm}. We get the  following table from the Collatz
sequence of $319$.
\[
\begin{array}{cccccccccccccccc}
b_i & a_i & d_i & 3d_i & e_i & 3e_i   \\
5109 = 12 \times 425+ 9  & 319 = 2 \times 159 + 1  & 639 & 1917\\ 
1917 = 12 \times 159 + 9 & 479 = 2 \times 239 + 1 & 959 &  2877 \\
 2877 = 12 \times 239+ 9 & 719 = 2 \times 359 + 1  & 1439 & 4317\\
4317 = 12 \times 359+ 9  & 1079 = 2 \times 539 + 1 & 2159 & 6477\\ 
6477= 12 \times 539+ 9 &  1619 = 2 \times 809 + 1  & 3239 & 9717 \\
9717= 12 \times 809+ 9 & 2429 = 4 \times 607 + 1 & 1215&  3645 \\
3645= 12 \times 303 + 9 &  911 = 2 \times 455 + 1 & 1823 & 5469 \\
5469 = 12 \times 455 + 9   &1367 = 2 \times 683 + 1 & 2735 & 8205  \\ 
8205 = 12 \times 683 + 9  & 2051 = 2 \times 1025 + 1 & 4103 & 12309  \\ 
12309 = 12 \times 1025 + 9  & 3077 = 4 \times 769 + 1 & & &  3079 & 9237 \\
9237 = 12 \times 769 + 9  & 577 = 2^6 \times 9 + 1 &   &    & 2311 & 6933 \\
 6933 = 12 \times 577 + 9  &  433 = 2^4 \times 27 + 1 & & & 1735 & 5205\\
 5205 = 12 \times 433 + 1 & 325 = 4 \times 81 + 1 &  & & 327 & 981  \\
981 = 12 \times 81 + 9  &  61 = 4 \times 15 + 1 & 31 & 93 \\ 
93 = 12 \times 7 + 9  &  23 = 2 \times 11 + 1 & 47 & 141  \\ 
 141 = 12 \times 11 + 9 &  35 = 2 \times 17 + 1 & 71  & 213   \\
213 = 12 \times 17 + 9  & 53 = 4^2 \times 3 + 4 + 1  & 7 & 21 \\ 
21= 12 \times 1 + 9  & 5 = 4 \times 1 + 1 &   & & 7 & 21 \\ 
21 = 12 \times 1 + 9  & 1
\end{array}
\]
} \end{exm}


\section{Network of Collatz sequences.} \label{networksection}

In this  section, we construct a network  of diagonal arrays
which contain the Collatz sequences of all odd numbers.

\begin{thm} \label{networkthm} 
For $n  \not \equiv  1$ mod $3$,  define a diagonal  array as
follows. Let  $u_0 =  4n+1$ and $u_i  = 2u_{i-1}+1$,  for $i
\geq 1$. For $j \geq  0$, let $v_{0,j}=u_j$, and for $k \geq
1$, let $v_{k,k} = 3v_{k-1,k-1}+2$. Finally, for $j >i$, let
$v_{i,j} = 2v_{i,j-1}+1$. We get an array
 \[\begin{array}{lllllllllllllllll}
u_0 & u_1 & u_2 &u_3 & u_4 & u_5 &  u_6 & u_7 & \dots \\
& v_{1,1} & v_{1,2} & v_{1,3} & v_{1,4} & v_{1,5} & v_{1,6} & v_{1,7} & \dots \\
&& v_{2,2} & v_{2,3} &  v_{2,4} & v_{2,5} & v_{2,6} & v_{2,7} & \dots \\
&&& v_{3,3} & v_{3,4} &  v_{3,5} & v_{3,6} & v_{3,7} &  \dots \\
&&&& \dots & \dots  \\

\end{array}
\]
with the following properties:

\begin{enumerate}

\item \label{partu_0multi3}  $u_k \not  \equiv 2$  mod $3$  for all  $k \geq 0$, whereas,
  $v_{i,j} \equiv 2$ mod $3$, if $i>0$ and $j>0$.

\item \label{partnotjump} $u_0 \equiv 1$ mod  $4$ and $v_{i,i} \equiv 1$ mod $4$
  for all  $i$. $u_i  \not \equiv 1$  mod $4$ for  $i>0$ and
  $v_{i,j} \not \equiv 1$ mod $4$ if $i \not = j$.

\item  For  $i \geq  0$,  if $u_i  \equiv  1$  mod $3$  then
  $u_{i+1} \equiv  0$ mod  $3$. On the  other hand,  if $u_i
  \equiv 0$ mod $3$ then $u_{i+1} \equiv 1$ mod $3$.

\item If $u_0  \equiv 0$ mod $3$, then write  $u_0 = 3N$ for
  some $N$. For $i \geq 0$, let $j_i$ denote the sequence of
  jumps of  height $i$  from $N$, then  $u_{2i} =  3j_i$ and
  $u_{2i+1} =  6j_i+1$. Alternately,  if $u_1 \equiv  0$ mod
  $3$, then write  $u_1 = 3N$ for some $N$.  For $i \geq 0$,
  let $j_i$ denote the sequence  of jumps of height $i$ from
  $N$. Then, $u_{2i+1} = 3j_i$ and $u_{2i}=6j_i+1$.

\item For $j \geq 1$, the $j$-th column is the first few odd
  numbers in the Collatz sequence of $u_j$.

\item  For  $j \geq  i >0$,
   $v_{i,j} = 3v_{i-1,j-1}+2$.

\item \label{v_ijgoesto1mod3part}  Let  $a_{i,i}$ denote the odd number  that comes after
  $v_{i,i}$ in the Collatz  sequence of $v_{i,i}$. If $n$ is
  even, then  for $k \geq  0$, $a_{2k,2k} \equiv 1$  mod $3$
  and $v_{2k+1,2k+1}=4a_{2k,2k}+1$. Moreover, $a_{2k,2k}$ is
  smaller than both $v_{2k,2k}$ and $v_{2k+1,2k+1}$. If $n$ is
  odd, then  for $k \geq  1$, $a_{2k-1,2k-1} \equiv 1$  mod $3$
  and $v_{2k,2k}=4a_{2k-1,2k-1}+1$. Like before, $a_{2k-1,2k-1}$ is
  smaller than both $v_{2k-1,2k-1}$ and $v_{2k,2k}$.

\item  If $n$  is  odd, then  for  $k \geq  1$, the  Collatz
  sequences  of $u_{2k-1}$  and  $u_{2k}$ merge.  If $n$  is
  even,  then  for $k  \geq  0$,  the  Collatz sequences  of
  $u_{2k}$ and $u_{2k+1}$ merge.

\item For  every $n  >0$, such that,  $n \not \equiv  1$ mod
  $3$, the Collatz sequence of $u_0$ merges with the Collatz
  sequence of a smaller number  $a$ such that $a \not \equiv
  2$ mod $3$.

\end{enumerate}

\end{thm}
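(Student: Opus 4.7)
The plan handles the nine claims in order. Parts 1--4 are direct inductions on the recurrences. Since $n \not\equiv 1 \pmod 3$, the base value $u_0 = 4n+1 \equiv n+1 \pmod 3$ lies in $\{0,1\}$, and $u_{i+1}=2u_i+1$ swaps $0 \leftrightarrow 1 \pmod 3$ without ever producing~$2$; meanwhile $v_{i,i}=3v_{i-1,i-1}+2\equiv 2\pmod 3$ and $v_{i,j}=2v_{i,j-1}+1$ preserves residue~$2\pmod 3$, giving part~1. A parallel check mod~$4$ yields part~2, and part~3 is immediate from the $0\leftrightarrow 1$ swap. For part~4, $u_{i+2}=4u_i+3$, so from $u_0=3j_0$ with $j_0=N$ induction gives $u_{2i}=3j_i$ (where $j_i$ is the jump of height~$i$ from~$N$, satisfying $j_{i+1}=4j_i+1$) and $u_{2i+1}=6j_i+1$; the alternative case $u_1\equiv 0\pmod 3$ is symmetric.

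Parts 5 and 6 follow from the identity
\[
v_{i+1,j}=\tfrac{3v_{i,j}+1}{2}\qquad(j>i\geq 0),
\]
proved by induction on $j-i$. The base case $j=i+1$ reads $v_{i+1,i+1}=3v_{i,i}+2=(3(2v_{i,i}+1)+1)/2$, and the step uses the column recurrence
\[
v_{i+1,j}=2v_{i+1,j-1}+1=2\cdot\tfrac{3v_{i,j-1}+1}{2}+1=3v_{i,j-1}+2=\tfrac{3v_{i,j}+1}{2}.
\]
By part~2, $v_{i,j}\equiv 3\pmod 4$ for $j>i$, so the right side is precisely the Collatz step to the next odd number; this proves part~5, and part~6 is the same identity re-indexed as $v_{i,j}=3v_{i-1,j-1}+2$.

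Parts 7 and 8 reduce to Proposition~\ref{tailsthm} applied to the diagonal. Writing $v_{i,i}=2^{k_i}N_i+1$ with $N_i$ odd, a short induction mod~$8$ using $v_{k,k}=3v_{k-1,k-1}+2$ shows $v_{i,i}\pmod 8$ alternates between $1$ and~$5$, with $v_{0,0}\equiv 1\pmod 8$ when $n$ is even and $v_{0,0}\equiv 5\pmod 8$ when $n$ is odd; hence $k_i=2$ exactly where $v_{i,i}\equiv 5\pmod 8$, and at the opposite indices $k_i\geq 3$. At the latter indices, Proposition~\ref{tailsthm} part~3(b) supplies $a_{i,i}=3\cdot 2^{k_i-2}N_i+1$; then $a_{i,i}\equiv 1\pmod 3$ is immediate and $v_{i+1,i+1}=3v_{i,i}+2=4a_{i,i}+1$ is a one-line check, while $a_{i,i}<v_{i,i}$ reduces to $3\cdot 2^{k_i-2}<2^{k_i}$. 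Part~8 is then a corollary: by Proposition~\ref{tailsthm} part~1 applied to $v_{i+1,i+1}=4a_{i,i}+1$, the Collatz sequences of $v_{i+1,i+1}$ and $a_{i,i}$ are equivalent, and since $a_{i,i}$ is the next odd after $v_{i,i}$, the sequences of $v_{i,i}$ and $v_{i+1,i+1}$ merge; by part~5 these are the Collatz sequences of $u_i$ and~$u_{i+1}$.

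Part~9 is where I expect the main obstacle. For $n$ even, part~7 supplies $a=a_{0,0}<u_0$ with $a_{0,0}\equiv 1\pmod 3$. For $n$ odd with $n\equiv 0\pmod 3$, Proposition~\ref{tailsthm} part~1 gives Collatz of $u_0=4n+1$ equivalent to Collatz of $n$, and $a=n<u_0$ works. The delicate sub-case is $n$ odd with $n\equiv 2\pmod 3$, i.e.\ $n=6m+5$ and $u_0=24m+21$. Here I would invoke the Corollary of Section~\ref{reduce21section}: Collatz of $u_0$ merges with Collatz of $1$ (if $m=0$) or of $p=2m+1$ or $2p+1=4m+3$ (both $<u_0$). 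If the resulting merge partner is $\not\equiv 2\pmod 3$ we are done; otherwise I iterate, chaining Proposition~\ref{tailsthm} and the Section~\ref{reduce21section} Corollary to descend through strictly smaller merge partners until reaching one $\not\equiv 2\pmod 3$ (at worst~$1$, which is $\equiv 1\pmod 3$). The hard part is managing the case analysis so that each step strictly decreases the merge partner, which requires careful bookkeeping of the options $N$ versus $2A+1$ in the $k=1$ case of Proposition~\ref{tailsthm}.
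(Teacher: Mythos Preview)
Your treatment of parts 1--8 is correct and close to the paper's. The inductions for parts 1--4 match; your induction on $j-i$ for the identity $v_{i+1,j}=(3v_{i,j}+1)/2$ in parts 5--6 is a cleaner alternative to the paper's explicit expansion $v_{i,j}=2^{j-i}v_{i,i}+(2^{j-i}-1)$; and your mod~$8$ alternation for part~7 is a tidy replacement for the paper's repeated invocation of Theorem~\ref{timefor2plythm}. Part~8 is then the same corollary in both.

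The divergence is in part 9, specifically the sub-case $n$ odd with $n\equiv 2\pmod 3$. Your plan routes through the Corollary of Section~\ref{reduce21section} and then proposes to iterate, conceding that the ``$N$ versus $2A+1$'' dichotomy in the $k=1$ case of Proposition~\ref{tailsthm} is the obstacle. That obstacle is real: the Section~\ref{reduce21section} Corollary only applies to integers of the form $24t+21$, and the merge partner it produces need not be of that form; moreover, the $2A+1$ branch of the $k=1$ case can \emph{increase} the number, so strict descent is not automatic. You have not supplied the bookkeeping that would close this.

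The paper sidesteps the whole iteration. Since $u_0=4n+1$ with $n$ odd, Proposition~\ref{tailsthm}\,(\ref{jumppart}) makes the Collatz sequences of $u_0$ and $n$ equivalent. Now apply Theorem~\ref{multiply2reversetill4thm}\,(\ref{notmod3part}) to the reverse Collatz sequence $(p_i)$ of $n$: while $p_i\equiv 2\pmod 3$ one has $p_{i+1}=(2p_i-1)/3<p_i$, so the sequence strictly decreases until it hits some $p_k\not\equiv 2\pmod 3$ with $k\ge 1$, hence $p_k<n<u_0$. By construction of the reverse sequence, the forward Collatz sequence of $p_k$ passes through $n$, so it merges with that of $u_0$, and $a=p_k$ is the desired smaller number with $a\not\equiv 2\pmod 3$. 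This replaces your open-ended iteration with a single, already-proved descent lemma; I recommend you swap it in.
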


\begin{proof}
\begin{enumerate}

\item Since $n \not \equiv 1$ mod $3$, $u_0=4n+1 \not \equiv
  2$ mod $3$. If $u_i  \not \equiv 2$ mod $3$, then $u_{i+1}
  = 2u_i+1  \not \equiv 2$ mod $3$.  Consequently, $u_k \not
  \equiv  2$ mod  $3$  for all  $k  \geq 0$.  

For $i >  0$, $v_{i,i} \equiv 2$ mod  $3$, by definition. If
$v_{i,j}  \equiv 2$  mod $3$,  then $v_{i,j+1}  = 2v_{i,j}+1
\equiv 2$ mod $3$. Thus,  it follows that $v_{i,j} \equiv 2$
mod $3$, if $i>0$ and $j>0$.

\item    $u_0   \equiv    1$   mod    $4$    by   definition.
  $v_{i,i}=3v_{i-1,i-1}+2 \equiv 1$  mod $4$, if $v_{i-1,i-1}
  \equiv 1$ mod $4$. Since  $v_{0,0} = u_0$, it follows that
  $v_{i,i} \equiv 1$ mod $4$ for $i \geq 0$. For $i>0$, $u_i
  =2u_{i-1}+1 \not  \equiv 1$ mod $4$ since  $u_{i-1}$ is an
  odd number. Similarly,  $v_{i,j} =2v_{i,j-1}+1 \not \equiv
  1$ mod $4$, when $i \not = j$.

 \item  Since $u_{i+1}=2u_i+1$,  if $u_i  \equiv 0$  mod $3$,
  then $u_{i+1} \equiv 1$ mod $3$. Similarly, if $u_i \equiv
  1$ mod $3$, then $u_{i+1} \equiv 0$ mod $3$

\item We prove this result by using the induction principle.
  If $u_0  = 3N$, then $u_1  = 6N+1$. Thus $u_0  = 3j_0$ and
  $u_1=6j_0+1$.  Assume  $u_{2i}  =  3j_i$ and  $u_{2i+1}  =
  6j_i+1$. Then, $u_{2(i+1)=2i+2}  = 2u_{2i+1}+1 = 3(4j_i+1)
  =  3j_{i+1}$.  And   $u_{2(i+1)+1=2i+3}  =  2u_{2i+2}+1  =
  6j_{i+1}+1$. Hence  the result.  The proof is  similar for
  the case  when $u_1 \equiv 0$  mod $3$.

\item  When $j  \geq 1$,  $v_{0,j}=u_j$. We  know  from Part
  \ref{partnotjump} that  $v_{i,j} \equiv  3$ mod $4$  if $i
  \not  = j$.  Therefore, the  odd number  that  comes after
  $v_{i,j}$  in   the  Collatz  sequence   of  $v_{i,j}$  is
  $(3v_{i,j}+1)/2$. We  do not compute the  odd numbers that
  come  after $v_{i,i}$  in  this part.  Hence, assume  that
  $j>i$.

For $k  \geq 1$, let $z_k$  be defined as follows:  \[ z_k =
2^k + 2^{k-1}  + 2^{k-2} + \cdots +  2+1 = \sum_{i=0}^k2^i =
\frac{2^{k+1}-1}{2-1} = 2^{k+1}-1.\]
Then, by definition, for $j >i$,  $v_{i,j} = 2^{j-i}v_{ii}+z_{j-i-1}$, and
\[
v_{i+1,j} = 2^{j-i-1}v_{i+1,i+1}+z_{j-i-2} = 2^{j-i-1}(3v_{i,i}+2) +z_{j-i-2}.
\]
\[
\frac{3v_{i,j}+1}{2} = \frac{3\times2^{j-i}v_{ii}+3\times z_{j-i-1}+1}{2} = 
3\times2^{j-i-1}v_{ii}+ \frac{3z_{j-i-1}+1}{2}.\]

\[\frac{3z_{j-i-1}+1}{2} = \frac{3(2^{j-i}-1)+1}{2} =  3\times2^{j-i-1}-1 =
2 \times 2^{j-i-1} + (2^{j-i-1}-1) = 2^{j-i}+z_{j-i-2}.\]

Consequently,
\[
\frac{3v_{i,j}+1}{2} = v_{i+1,j}.\] This implies $v_{i+1,j}$
is the odd number that  comes after $v_{i,j}$ in the Collatz
sequence  of $v_{i,j}$. Hence,  for $j  \geq 1$,  the $j$-th
column is the first few  odd numbers in the Collatz sequence
of $u_j$.

\item For $j \geq i  >0$, when $i=j$, we have that $v_{i,j}
   = 3v_{i-1,j-1}+2$,  by definition. Let $i \not  = j$, then
   $v_{i,j} = 2v_{i,j-1}+1$. But $v_{i-1,j-1} \not \equiv 1$
   mod  $4$,   when  $i  \not  =  j$.   Hence  $v_{i,j-1}  =
   (3v_{i-1,j-1}+1)/2$.     Consequently,     $v_{i,j}     =
   3v_{i-1,j-1}+2$.

\item If  $n$ is even, then $v_{0,0}=2^kn+1$  such $k>2$. By
  Part   \ref{3multpart}   of  Proposition   \ref{tailsthm},
  $a_{0,0}  =  2^{k-2}3n+1  \equiv  1$  mod  $3$.  Moreover,
  $a_{0,0}   =   (3v_{0,0}+1)/4$.   Therefore,  by   Theorem
  \ref{timefor2plythm}, $v_{1,1} = 4a_{0,0}+1$. $a_{0,0}$ is
  smaller than  both $v_{0,0}$ and $v_{1,1}$.  Now the proof
  proceeds by  induction. Assume for $k  \geq 0$, $a_{2k,2k}
  \equiv  1$   mod  $3$,  $v_{2k+1,2k+1}=4a_{2k,2k}+1$,  and
  $a_{2k,2k}$   is  smaller   than   both  $v_{2k,2k}$   and
  $v_{2k+1,2k+1}$.  Since   $a_{2k,2k}$  is  odd,   by  part
  \ref{divby2^3part}    of    Proposition    \ref{tailsthm},
  $a_{2k+1,2k+1} = (3v_{2k+1,2k+1}+1)/2^j$ where $j \geq 3$.
  By   Theorem    \ref{timefor2plythm},   $v_{2k+2,2k+2}   =
  2^ja_{2k+1,2k+1}+1$.      Hence,      $a_{2k+2,2k+2}     =
  (3v_{2k+2,2k+2}+1)/4$.       Again,       by       Theorem
  \ref{timefor2plythm}, $v_{2k+3,2k+3}=4a_{2k+2,2k+2}+1$. We
  see that  $a_{2k+2,2k+2}$ is smaller  that $v_{2k+2,2k+2}$
  and $v_{2k+1,2k+1}$. Hence, the proof.

When $n$  is odd, by part  \ref{divby2^3part} of Proposition
\ref{tailsthm},  $a_{0,0}=(3v_{0,0}+1)/2^r$,  such  that  $r
\geq   3$.  Therefore,   by   Theorem  \ref{timefor2plythm},
$v_{1,1}  =  2^ra_{0,0}+1$.  Hence  $a_{1,1}=(3v_{1,1}+1)/4$
which  implies  $v_{2,2}=4a_{1,1}+1$.  Moreover  $a_{1,1}  =
2^{r-2}3a_{0,0}+1  \equiv  1$  mod  $3$. We  also  see  that
$a_{1,1}$ is  smaller than $v_{1,1}$ and  $v_{2,2}$. Now, we
apply a  similar argument,  as in the  $n$ is even  case, to
show that for $k \geq  1$, $a_{2k-1,2k-1} \equiv 1$ mod $3$,
$v_{2k,2k}=4a_{2k-1,2k-1}+1$, and $a_{2k-1,2k-1}$ is smaller
than both $v_{2k-1,2k-1}$ and $v_{2k,2k}$

 \item   By    Part   \ref{v_ijgoesto1mod3part}   and   Part
   \ref{jumppart} of Proposition \ref{tailsthm}, we get that
   If $n$ is odd, then for $k \geq 1$, the Collatz sequences
   of $v_{2k-1,2k-1}$ and $v_{2k,2k}$ are equivalent; and if
   $n$ is even then for $k \geq 0$, the Collatz sequences of
   $v_{2k,2k}$    and   $v_{2k+1,2k+1}$    are   equivalent.
   Consequently, it follows  that if $n$ is odd  for $k \geq
   1$,  the  Collatz sequences  of  $u_{2k-1}$ and  $u_{2k}$
   merge; and if $n$  is even, then for $k  \geq 0$, the Collatz
   sequences of $u_{2k}$ and $u_{2k+1}$ merge.
  
\item  If  $n$  is  odd,  then  by  Part  \ref{jumppart}  of
  Proposition  \ref{tailsthm},  we   get  that  the  Collatz
  sequences  of $u_0=4n+1$  and  $n$ are  equivalent. If  $n
  \equiv  0$ mod $3$,  since $n  < u_0$,  in this  case, the
  Collatz sequence of $u_0$ merges with the Collatz sequence
  of a  smaller number.  Now, let $n  \equiv 2$ mod  $3$. By
  Part          \ref{notmod3part}         of         Theorem
  \ref{multiply2reversetill4thm},    the   reverse   Collatz
  sequence of  $n$ reaches  a number $a$  such that  $a \not
  \equiv 2$  mod $3$. By  definition of the  reverse Collatz
  sequence, it  is strictly decreasing till  it reaches $a$.
  Hence $a  < u_{0}$. Finally, if  $n$ is even,  we get that
  the  Collatz sequence  of $u_0$  reaches a  smaller number
  congruent     to      $1$     mod     $3$      by     Part
  \ref{v_ijgoesto1mod3part}.

\end{enumerate}
\end{proof}

\begin{exm}{\em Let $u_i$ and $v_{i,j}$ be defined as in Theorem  \ref{networkthm}.  Let $t_{i}$ denote the odd number congruent to $1$ mod $3$  such that the Collatz sequences of $v_{i,i}$ and $t_i$ merge:

$n=0:$

\[\begin{array}{lllllllllllllllllllllllllllllllllllllllll} 
u_i:  & 1 & 3 & 7 & 15 & 31 & 63 & 127 & 255 & 511 & 1023 & 2047 & 4095 & 8191  \\ 
v_{1,i}:& &5 & 11 & 23 & 47 &  95 & 191 & 383 & 767 & 1535 & 3071& 6143 & 12287\\ 
v_{2,i}: & & & 17 & 35 &71 & 143 & 287 &  575 & 1151 & 2303 &4607 & 9215  & 18431\\ 
v_{3,i}: && & &  53 & 107 & 215 & 431 & 863 & 1727 & 3455 & 6911 & 13823 & 27647\\
v_{4,i}: && & & & 161 & 323 & 647 & 1295 & 2591 & 5183 & 10367 &20735 &41471 \\
v_{5,i}&&&&&& 485 & 971 & 1943 & 3887 & 7775 &15551 &31103 &62207 \\
v_{6,i}: &&&&&&& 1457 & 2915 & 5831 &11663 & 23327 &46655 &93311 \\
v_{7,i}: &&&&&&&& 4373 & 8747& 17495 & 34991&  69983 & 139967 \\
\end{array}
\]

\[\begin{array}{lllllllllllllllllllllllllllllllllllllllll} 
v_{i,i}: & 1 & 5 & 17 & 53 & 161 & 485 & 1457 & 4373 & \dots \\
t_i:& 1 & 1 & 13 & 13 & 121 & 121 & 1093 & 1093 \\
\end{array}
\]

$n=3:$
\[\begin{array}{lllllllllllllllllllllllllllllllllllllllll} 

u_i: & 13 &    27 & 55 & 111 & 223 & 447&895& 1791&3583& 7167&14335&28671 \\
v_{1,i}: &  & 41 & 83 & 167 & 335 & 671 & 1343 & 2687 & 5375 & 10751 & 21503 & 43007\\ 
v_{2,i}: & & &  125 & 251 & 503 & 1007 & 2015 & 4031 & 8063 & 16127 & 32255 & 64511 \\
v_{3,i}:&  & &  & 377 & 755 & 1511 &  3023 & 6047 &  12095 & 24191 &  48383 & 96767 \\
v_{4,i}: & &&&&1133 & 2267 & 4535 & 9071 & 18143 & 36287 &  72575 &  145151 \\
v_{5,i}: &  &&&&&   3401 &    6803 & 13607 &  27215 &  54431 &  108863 & 217727 \\ 
v_{6,i}: & &&&&&& 10205& 20411& 40823&81647&163295&326591

\end{array}
\]
}  \end{exm}

\[\begin{array}{lllllllllllllllllllllllllllllllllllllllll} 
v_{i,i}: & 13 & 41 & 125 & 377 & 1133 & 3401 & 10205 & \dots\\
t_i:& 13 & 31 & 31 & 283 & 283 & 2551 & 2551
\end{array}
\]

\begin{corollary}
Every odd number occurs in one of the diagonal arrays defined in Theorem  \ref{networkthm}.
\end{corollary}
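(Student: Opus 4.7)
The plan is to locate an arbitrary odd number $a$ inside the array indexed by a suitable $n \not\equiv 1 \pmod 3$, first by determining which row it sits in and then by using the horizontal recurrence to pinpoint its column. The key observation is that each row of the array has the form $v_{k,k}, v_{k,k+1}, v_{k,k+2}, \ldots$ with $v_{k,j} = 2 v_{k,j-1} + 1$ (and the top row $u_0, u_1, u_2, \ldots$ plays this role when $k = 0$), so iterating the recurrence gives the closed form $v_{k, k+s} = 2^s v_{k,k} + (2^s - 1)$. Dually, any odd $a$ can be written uniquely as $a = 2^s c + (2^s - 1)$ with $c \equiv 1 \pmod 4$: repeatedly apply $a \mapsto (a-1)/2$ while the result remains odd; this process terminates precisely when the current value is $\equiv 1 \pmod 4$.

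Thus the problem reduces to showing that every $c \equiv 1 \pmod 4$ appears as some diagonal entry $v_{k,k}$ in some admissible array. I would first unroll the diagonal recurrence $v_{k,k} = 3 v_{k-1,k-1} + 2$ starting from $v_{0,0} = 4n+1$ to obtain
\[
v_{k,k} = 2 \cdot 3^k (2n+1) - 1, \qquad \text{equivalently} \qquad \frac{v_{k,k}+1}{2} = 3^k(2n+1),
\]
and then split on the residue of $c$ modulo $12$. Writing $c = 4m+1$, if $m \not\equiv 1 \pmod 3$ (i.e.\ $c \not\equiv 5 \pmod{12}$), I take $n := m$ and $k := 0$, which is valid because the hypothesis of Theorem \ref{networkthm} is exactly $n \not\equiv 1 \pmod 3$. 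Otherwise $c \equiv 5 \pmod{12}$, so $(c+1)/2$ is odd and a positive multiple of $3$; I factor $(c+1)/2 = 3^k t$ with $t$ odd and coprime to $3$, and set $n := (t-1)/2$. Then $k \geq 1$, and $n \not\equiv 1 \pmod 3$ because $2n+1 = t \not\equiv 0 \pmod 3$; the closed form confirms $v_{k,k} = c$.

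Combining the two ingredients yields $a = 2^s c + (2^s - 1) = v_{k, k+s}$ inside the array indexed by $n$ (interpreted as $u_s$ when $k=0$). The only delicate point is the factorization argument in the second case: one has to extract $n$ and $k$ simultaneously satisfying $n \not\equiv 1 \pmod 3$ and $v_{k,k} = c$, and this is the unique place where the hypothesis on $n$ is used nontrivially. Everything else is bookkeeping with the recurrences already established in Theorem \ref{networkthm}.
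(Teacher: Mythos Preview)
Your argument is correct, but it proceeds along a genuinely different axis from the paper's. The paper splits on the residue of $a$ modulo $3$: when $a\not\equiv 2\pmod 3$ it iterates $x\mapsto(x-1)/2$ down to some $u_0=4n+1$ with $n\not\equiv 1\pmod 3$, thereby placing $a$ in the \emph{top row} of an array; when $a\equiv 2\pmod 3$ it invokes Part~\ref{notmod3part} of Theorem~\ref{multiply2reversetill4thm} to run the reverse Collatz sequence until it exits the residue class $2\pmod 3$, and then appeals to the first case together with the column description (Part~5 of Theorem~\ref{networkthm}) to locate $a$ below that top-row entry. You instead split on the residue of $a$ modulo $4$: you first walk left along a \emph{row} to a diagonal entry $c\equiv 1\pmod 4$, and then locate $c$ on the diagonal by the closed form $v_{k,k}=2\cdot 3^{k}(2n+1)-1$ via a mod-$12$ case analysis. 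In short, the paper reaches $a$ by ``top row, then down a column''; you reach it by ``diagonal, then right along a row''. Your route is more explicit and self-contained, requiring nothing from Section~\ref{reverse}; the paper's route ties the corollary back to the reverse Collatz machinery, at the cost of leaving implicit the check that the relevant column is long enough to contain $a$ (which follows because all $p_i$ with $i\ge 1$ in the reverse sequence satisfy $p_i\equiv 3\pmod 4$, while the column terminates at the first entry $\equiv 1\pmod 4$).
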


\begin{proof}
Given  an  odd number  $u_i  \not  \equiv  2$ mod  $3$,  let
$u_{i-1}  = (u_i-1)/2$. This  sequence eventually  reaches a
number  that is  congruent to  $1$  mod $4$,  because it  is
strictly decreasing.  Moreover, $u_i \not \equiv  2$ mod $3$
for all $i$.  Thus, every $u_i \not \equiv  2$ mod $3$ occur
in   one  of   the  diagonal   arrays  defined   in  Theorem
\ref{networkthm}. Now  consider an odd integer  $a \equiv 2$
mod    $3$.   By    Part   \ref{notmod3part}    of   Theorem
\ref{multiply2reversetill4thm}, the reverse Collatz sequence
of $a$  reaches a  number that is  not congruent to  $2$ mod
$3$. Hence, the result.
\end{proof}

\end{document}